\documentclass[reqno, 11pt]{amsart}
\usepackage[text={155mm,235mm},centering]{geometry}
\input diagxy
\input xy

\usepackage[active]{srcltx} 

\newtheorem{thm}{Theorem}[section]

\newtheorem{lem}[thm]{Lemma}
\newtheorem{prop}[thm]{Proposition}

\theoremstyle{definition}
\newtheorem{defn}[thm]{Definition}
\theoremstyle{property}

\theoremstyle{remark}

\newtheorem{ex}[thm]{Example}
\numberwithin{equation}{section}
\newtheorem*{thm*}{Theorem}

\usepackage[mathscr]{eucal}
\usepackage[all]{xy}
\usepackage{mathrsfs}
\usepackage{xypic}
\usepackage{amsfonts}
\usepackage{amsmath}
\usepackage{amsthm,bm}
\usepackage{amssymb,tikz-cd}
\usepackage{latexsym}
\usepackage{tabularx}
\usepackage{graphicx}
\usepackage{pict2e}
\usepackage{hyperref}
\usepackage{tikz}
\usepackage{textcomp}
\usepackage[scr=rsfs]{mathalfa}
\usepackage[all]{xy}
\usepackage{tikz-cd}
\definecolor{ceruleanblue}{rgb}{0.16, 0.32, 0.75}
\hypersetup{
  colorlinks=true,
  allcolors=ceruleanblue,
  pdfauthor={},
  pdftitle={De Rham Cohomology of Quotients by Locally Free Lie Group Actions}
}
\allowdisplaybreaks

\begin{document}

\title[Descent of Basic Forms to Quotients]
{Descent of Basic Forms to Quotients by Locally Free Lie Group Actions}

\author{Yi Lin}

\address{Yi Lin \\ Department of Mathematical Sciences \\  Georgia Southern University \\Statesboro, GA, 30460 USA}
\email{yilin@georgiasouthern.edu}

\subjclass[2020]{53C12, 57S25, 57R30,  58A12}
\keywords{Foliation, Basic cohomology, Diffeological de Rham cohomology, 
Subduction, Disconnected Lie group}



\begin{abstract}
We prove an equivariant version of the theorem of Hector,
Mac\'{\i}as-Virg\'os, and Sanmart\'{\i}n-Carb\'on identifying
diffeological forms on the leaf space of a foliation with basic forms.
For any group $K$ acting by foliation-preserving diffeomorphisms, we show
that this identification is an isomorphism of $K$-equivariant cochain
complexes.

We then establish a descent theorem for basic differential forms under
locally free Lie group actions without assuming properness. Let a Lie
group $H$, not necessarily connected or second countable, act smoothly
and locally freely on a second countable manifold $M$, and let
$\mathcal F$ be the foliation by $H_0$-orbits. We prove that pullback
induces an isomorphism
\[
\Omega^\bullet(M/H)\cong\Omega^\bullet(M,\mathcal F)^H
\]
provided that $H$ is second countable or that the induced action of
$H/H_0$ on $M/H_0$ satisfies a natural subduction condition. We also give
a smooth free action for which descent fails, showing that an additional
hypothesis is genuinely necessary.

\end{abstract}

\maketitle
\maketitle
\maketitle
\section{Introduction}

Let $(M,\mathcal F)$ be a foliated manifold, and let
$\pi:M\to M/\mathcal F$ be the quotient map, where $M/\mathcal F$ is
equipped with the quotient diffeology. Hector, Mac\'{\i}as-Virg\'os, and
Sanmart\'{\i}n-Carb\'on proved~\cite{HMS} that the diffeological de Rham
complex of $M/\mathcal F$ is naturally isomorphic to the basic de Rham
complex of $(M,\mathcal F)$. We refer to this result as the HMS theorem.
Thus the diffeological de Rham cohomology of the leaf space, which is
usually difficult to compute directly, can be computed using the basic
cohomology of the foliation. More recently, Miyamoto~\cite{M22} gave a Lie
groupoid-theoretic proof of this result and extended the theorem to
important classes of singular foliations, including singular Riemannian
foliations.

This paper has two main results. The first is an equivariant version of
the HMS theorem. Suppose that a group $K$ acts on $(M,\mathcal F)$ by
foliation-preserving diffeomorphisms. Then the action descends to the leaf
space $M/\mathcal F$, and we prove that the canonical pullback
\[
\pi^*:\Omega^\bullet(M/\mathcal F)
\longrightarrow \Omega^\bullet(M,\mathcal F)
\]
is a $K$-equivariant isomorphism of cochain complexes. This equivariant
formulation broadens the applicability of the HMS theorem and is a key
ingredient in the proof of our second main result. The original proof of
Hector--Mac\'{\i}as-Virg\'os--Sanmart\'{\i}n-Carb\'on proceeds via the
holonomy pseudogroup. Our proof instead uses Haefliger's cocycle
description of a foliation. This allows us to track the group action
explicitly, although the surjectivity argument invokes a technical lemma
from~\cite[Lemma~3.3]{HMS}.

The principal application of this equivariant identification---and the
subject of our second main result---is the descent of basic differential
forms to quotients by Lie group actions. Let a Lie group $H$ act smoothly
on a manifold $M$. A differential form on $M$ is called \emph{$H$-basic}
if it is $H$-invariant and horizontal with respect to the infinitesimal
$H$-action. Pullback by the quotient map $\pi_H:M\to M/H$ sends every
diffeological differential form on $M/H$ to an $H$-basic form on $M$. We
ask when the converse holds: under what conditions does an $H$-basic form
on $M$ descend to a diffeological differential form on $M/H$?

Historically, the diffeological study of this descent question began with
Watts's work on compact Lie group actions~\cite{W12}. Karshon and
Watts~\cite{KW} subsequently established a more general result for Lie
group actions. They proved that pullback from $M/H$ is always injective
and that its image is contained in the complex of $H$-basic forms. They
further proved that every $H$-basic form descends to $M/H$ if the identity
component $H_0$ acts properly, without requiring the $H$-action itself to
be free or proper. Watts also extended this identification to proper Lie
groupoids and obtained an intrinsic de Rham theorem for their orbit
spaces~\cite{W22}.

We address the descent question for locally free Lie group actions without
any properness assumption. Let $H$ be a Lie group, not necessarily connected
or second countable, and let $M$ be a second countable manifold equipped
with a smooth, locally free right $H$-action. Let $H_0$ denote the identity
component of $H$, and let $\mathcal F$ be the foliation of $M$ by
$H_0$-orbits. Since $H$ and $H_0$ have the same Lie algebra, the $H$-basic
forms are precisely the $H$-invariant forms that are basic for $\mathcal F$.
The resulting complex is therefore $\Omega^\bullet(M,\mathcal F)^H$.

We prove that every $H$-basic form descends uniquely to $M/H$ under either
of the following two natural hypotheses: (i) $H$ is second countable; or
(ii) the induced action of the component group $H/H_0$ on $M/H_0$ satisfies
a natural subduction condition. Equivalently, pullback induces an
isomorphism of cochain complexes
\[
\pi_H^*:\Omega^\bullet(M/H)
\xrightarrow{\;\cong\;}
\Omega^\bullet(M,\mathcal F)^H.
\]
Our result thus complements the theorem of Karshon and Watts. Their theorem
permits actions that are not locally free but assumes properness of the
identity-component action, whereas ours treats locally free actions without
assuming properness.

We apply the equivariant HMS theorem after first passing to the quotient by
$H_0$. It identifies differential forms on $M/H_0$ with forms on $M$ that
are basic for $\mathcal F$, while equivariance accounts for the residual
action of $H/H_0$. The remaining issue is whether an $H/H_0$-invariant form
on $M/H_0$ descends through
\[
M/H_0\longrightarrow (M/H_0)/(H/H_0)=M/H.
\]
The subduction condition provides one mechanism for establishing this
descent, while second countability of $H$ provides another via an extension
of the Haefliger-cocycle argument that incorporates the residual action of
$H/H_0$.

The need for an additional hypothesis is genuine. In
Example~\ref{ex:subduction-fails}, we give a smooth free action for which
$M/H$ is a single point, while the complex of $H$-basic forms contains a
nonzero $1$-form. We also show that second countability of $H$ does not
itself imply the subduction condition. Thus the two hypotheses provide
distinct sufficient conditions for the desired isomorphism.

Our theorem identifies $H^\bullet_{\mathrm{dR}}(M/H)$ with the cohomology
of the complex of $H$-basic forms on $M$. This complex is typically more
accessible than the diffeological de Rham complex of the quotient. This
computational perspective is complementary to the approach developed in
our recent work~\cite{L26}. In that work, we draw on
substantial machinery from Riemannian foliation theory to construct a
transverse averaging operator on the basic de Rham complex in the presence
of an isometric transverse Lie algebra action. The resulting operator
yields invariant representatives of basic cohomology classes, thereby
providing a useful tool for computing basic cohomology. For the problem
considered here, however, the method is more direct: it studies when basic
differential forms descend to diffeological quotients.

An important homogeneous application concerns a recent result of Clark and
Ziegler~\cite{CZ26}. Let $H$ be a dense subgroup of a Lie group $G$, and
write $\mathfrak g$ and $\mathfrak h$ for the Lie algebras of $G$ and $H$,
respectively. Clark and Ziegler proved that $\mathfrak h$ is an ideal in
$\mathfrak g$ and identified the diffeological de Rham complex of $G/H$
with the Chevalley--Eilenberg complex of $\mathfrak g/\mathfrak h$.
Consequently,
\[
H^\bullet_{\mathrm{dR}}(G/H)
\cong H^\bullet(\mathfrak g/\mathfrak h).
\]
Their argument uses density in an essential way to replace $H$-invariance
by $G$-invariance at the cochain level.

In the homogeneous case $M=G$, with $H$ acting by right translations, our
theorem recovers the Clark--Ziegler dense-subgroup theorem through a
foliation-theoretic argument. More generally, our descent method does not
require $H$ to be dense in $G$. This broader applicability is illustrated
in Example~\ref{ex:non-second-countable-nondense}, where $H$ is nonclosed,
its closure is a proper subgroup of $G$, and its component group is
non-second-countable. In this case, our method completely determines the
diffeological de Rham cohomology of $G/H$.

Throughout the paper, all manifolds are assumed to be second countable,
except for Lie groups that are explicitly allowed to be
non-second-countable. The paper is organized as follows.
Section~\ref{diffeology} recalls the necessary diffeological background
and proves the equivariant HMS theorem, Theorem~\ref{Hector}.
Section~\ref{sec:CZthm} establishes the descent theorem,
Theorem~\ref{M/H}, examines the two sufficient conditions for descent,
and derives the dense-subgroup theorem of Clark and
Ziegler~\cite{CZ26} through a foliation-theoretic argument.

\section{De Rham cohomology of a diffeological space}\label{diffeology}

This section fixes the diffeological conventions used throughout the paper.
We recall plots, subspace, product, and quotient diffeologies, subductions,
diffeological forms, and the associated de Rham complex. We then prove
Theorem~\ref{Hector}, an equivariant version of \cite[Theorem 3.5]{HMS},
which is a key ingredient in the proof of Theorem~\ref{M/H}. For general
background on diffeology, we refer the reader to~\cite{I13}.
\begin{defn}
Let $X$ be a set. A \emph{parametrization} of $X$ is a map
$\alpha:U\to X$ defined on an open subset $U$ of some Euclidean space
$\mathbb R^n$, $n\geq 0$. A \emph{diffeology} $\mathcal D$ on $X$ is a family
of parametrizations satisfying the following axioms:
\begin{itemize}
\item[(1)] every constant parametrization belongs to $\mathcal D$;
\item[(2)] if $\alpha:U\to X$ belongs to $\mathcal D$ and $h:V\to U$ is a
smooth map between open subsets of Euclidean spaces, then
$\alpha\circ h$ belongs to $\mathcal D$;
\item[(3)] if $\alpha:U\to X$ is a parametrization such that every point
$p\in U$ has an open neighborhood $V_p\subset U$ with
$\alpha|_{V_p}\in\mathcal D$, then $\alpha\in\mathcal D$.
\end{itemize}
A parametrization belonging to a diffeology $\mathcal D$ is called a \emph{plot}. The pair
$(X,\mathcal D)$ is called a \emph{diffeological space}.

Let $(X,\mathcal D)$ be a diffeological space. A family of plots
$\mathcal C$ is called a \emph{generating family} for $\mathcal D$ if every
plot $f:U\to X$ is either constant, or, for every $x\in U$, there exist an open
neighborhood $V_x\subset U$, a plot $g:W\to X$ in $\mathcal C$, and a smooth map
$h:V_x\to W$ such that $f|_{V_x}=g\circ h$.
\end{defn}

\begin{defn}
A map $f:(X,\mathcal D)\to (Y,\mathcal D')$ between diffeological spaces is said
to be \emph{smooth} if, for every plot $\alpha\in\mathcal D$, the composite
$f\circ \alpha$ belongs to $\mathcal D'$. A smooth map $f$ is called a
\emph{diffeological diffeomorphism} if it is bijective and its inverse
$f^{-1}:(Y,\mathcal D')\to (X,\mathcal D)$ is also smooth.
\end{defn}

\begin{defn}\label{subspace}
Let $(X,\mathcal D)$ be a diffeological space and let $A\subset X$. The
\emph{subspace diffeology} on $A$ is the diffeology whose plots are precisely
those parametrizations $\alpha:U\to A$ such that the composite
$U\xrightarrow{\alpha} A\hookrightarrow X$ is a plot of $X$.
\end{defn}
\begin{defn}\label{product}
Let $(X,\mathcal D_X)$ and $(Y,\mathcal D_Y)$ be diffeological spaces. The
\emph{product diffeology} on $X\times Y$ is the diffeology whose plots are
precisely the parametrizations $\alpha:U\to X\times Y$ such that both
component maps $\operatorname{pr}_X\circ\alpha:U\to X$ and
$\operatorname{pr}_Y\circ\alpha:U\to Y$ are plots.
\end{defn}

\begin{defn}\label{forms}
Let $(X,\mathcal D)$ be a diffeological space. A differential $k$-form
$\gamma$ on $(X,\mathcal D)$ assigns to each plot $\alpha:U\to X$ a
differential $k$-form $\gamma_\alpha$ on $U$, subject to the compatibility
condition
\begin{equation}\label{compatibility}
\gamma_{\alpha\circ f}=f^*\gamma_\alpha
\end{equation}
for every smooth map $f:V\to U$ between open subsets of Euclidean spaces. We
denote the space of diffeological forms by $\Omega^\bullet(X,\mathcal D)$.
The exterior derivative is defined plotwise by
$d\{\gamma_\alpha\}:=\{d\gamma_\alpha\}$.
\end{defn}

Then $d^2=0$, and the \emph{de Rham cohomology} of $(X,\mathcal D)$ is
\[
H^\bullet_{\mathrm{dR}}(X,\mathcal D):=\ker(d)/\operatorname{im}(d).
\]
When the diffeology is clear, we write simply $H^\bullet_{\mathrm{dR}}(X)$.

If $f:(X,\mathcal D)\to (Y,\mathcal D')$ is smooth, its pullback is given by
\[
f^*:\Omega^\bullet(Y,\mathcal D')\to \Omega^\bullet(X,\mathcal D),
\qquad
(f^*\gamma)_\alpha:=\gamma_{f\circ\alpha}.
\]
It commutes with $d$ and hence induces
$f^*:H^\bullet_{\mathrm{dR}}(Y,\mathcal D')\to
H^\bullet_{\mathrm{dR}}(X,\mathcal D)$.

\begin{ex}[Manifold diffeology]
Let $\{(V_i,\phi_i)\}_{i\in I}$ be an atlas on a differentiable manifold
$M$. The finest diffeology on $M$ containing
$\mathcal C=\{\phi_i^{-1}\}_{i\in I}$ is called the \emph{manifold
diffeology} of $M$, and $\mathcal C$ is a generating family for it. When
$M$ is equipped with this diffeology, the diffeological de Rham complex
$\Omega^\bullet(M)$ agrees with the usual de Rham complex of $M$.
\end{ex}

\begin{defn}\label{quotient}
Let $(X,\mathcal{D})$ be a diffeological space, let $\sim$ be an equivalence
relation on $X$, and let $\pi:X\to X/\sim$ be the quotient map. Define
$\mathcal{D}'$ to be the family of parametrizations $\gamma:U\to X/\sim$
such that either $\gamma$ is constant, or for every $x\in U$ there exist
a neighborhood $V$ of $x$ and a plot $\alpha\in \mathcal{D}$ defined on
$V$ such that
\begin{equation}\label{quotient-plot}
\gamma|_V=\pi\circ \alpha.
\end{equation}

Then $\mathcal{D}'$ is a diffeology on $X/\sim$, called the
\emph{quotient diffeology}. It is the finest diffeology on $X/\sim$ for
which $\pi$ is smooth. A smooth map $p:X\to Y$ between diffeological
spaces is called a \emph{subduction} if it is surjective and $Y$ carries
the quotient diffeology induced by $p$.
\end{defn}

By~\cite[Article~1.48]{I13}, a smooth surjection $p:X\to Y$ is a
subduction if and only if for every plot $f:U\to Y$ and every $x\in U$,
there exist an open neighborhood $V\subset U$ of $x$ and a plot
$g:V\to X$ such that $f|_V=p\circ g$.

\begin{defn}
A right action of a group $H$ on a diffeological space $(X,\mathcal D)$ is
an action on the underlying set such that, for every $h\in H$, the map
$R_h:X\to X$, $x\mapsto x\cdot h$, is a diffeological diffeomorphism.
\end{defn}

Such an action induces pullback operators on
$\Omega^\bullet(X,\mathcal D)$. For $h\in H$, $\alpha\in
\Omega^\bullet(X,\mathcal D)$, and a plot $f:U\to X$, define
$(h^*\alpha)_f:=\alpha_{R_h\circ f}$. Each $h^*$ is a cochain automorphism of
$\Omega^\bullet(X,\mathcal D)$. We write $\Omega^\bullet(X,\mathcal D)^H$ for 
the subcomplex of forms fixed
by $h^*$ for every $h\in H$.

\begin{lem}\label{injective-pullback}
Let $p:(X,\mathcal D_X)\to(Y,\mathcal D_Y)$ be a subduction of
diffeological spaces. Then the pullback map
$p^*:\Omega^\bullet(Y,\mathcal D_Y)\to
\Omega^\bullet(X,\mathcal D_X)$ is injective.

In particular, suppose that a group $H$ acts on a diffeological space
$(X,\mathcal D)$, and equip $X/H$ with the quotient diffeology
$\mathcal D_H$. Then the quotient map $\pi_H:X\to X/H$ induces an injective
morphism of de Rham complexes
$\pi_H^*:\Omega^\bullet(X/H,\mathcal D_H)\to
\Omega^\bullet(X,\mathcal D)^H$.
\end{lem}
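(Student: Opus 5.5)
The plan is to prove injectivity plotwise, using the local lifting characterization of subductions from \cite[Article~1.48]{I13}. Suppose $\gamma\in\Omega^k(Y,\mathcal D_Y)$ satisfies $p^*\gamma=0$; we show $\gamma_f=0$ for every plot $f:U\to Y$. Given $x\in U$, choose an open neighborhood $V\subset U$ of $x$ and a plot $g:V\to X$ with $f|_V=p\circ g$. Writing $\iota:V\hookrightarrow U$ for the inclusion, the compatibility condition \eqref{compatibility} gives
\[
\gamma_f|_V=\iota^*\gamma_f=\gamma_{f\circ\iota}=\gamma_{p\circ g}=(p^*\gamma)_g=0 .
\]
Since such $V$ cover $U$, $\gamma_f=0$ on all of $U$, so $\gamma=0$. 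Linearity of $p^*$ and its compatibility with $d$ are already noted after Definition~\ref{forms}.

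For the second assertion, first check that $\pi_H:X\to X/H$ is a subduction. It is surjective, and by Definition~\ref{quotient} $\mathcal D_H$ is precisely the quotient diffeology induced by $\pi_H$, so the first part gives injectivity of $\pi_H^*$. One small subtlety is that constant plots of $X/H$ are allowed in Definition~\ref{quotient} without a lift. They do lift, however, to constant plots of $X$ by axiom (1), so the lifting characterization still applies.

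It remains to show that the image lies in the $H$-invariant forms. For $h\in H$ we have $\pi_H\circ R_h=\pi_H$, so for any plot $f$ of $X$,
\[
(h^*\pi_H^*\gamma)_f=(\pi_H^*\gamma)_{R_h\circ f}=\gamma_{\pi_H\circ R_h\circ f}=\gamma_{\pi_H\circ f}=(\pi_H^*\gamma)_f .
\]
Hence $\pi_H^*\gamma\in\Omega^\bullet(X,\mathcal D)^H$, and since $\pi_H^*$ commutes with $d$ it is a morphism of complexes into this subcomplex.

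No step is a serious obstacle. The only point needing care is applying the compatibility condition to the inclusion $V\hookrightarrow U$ to pass from local vanishing to global vanishing, which relies on forms being determined locally on $U$.
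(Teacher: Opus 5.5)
Your proposal is correct and follows essentially the same route as the paper. Both arguments prove injectivity plotwise via local lifts through the subduction and the compatibility condition. Both then obtain $H$-invariance of $\pi_H^*\gamma$ from the identity $\pi_H\circ R_h=\pi_H$. Your remark that constant plots of $X/H$ lift to constant plots of $X$ is a harmless extra check that the paper leaves implicit.
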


\begin{proof}
Let $\alpha\in\Omega^\bullet(Y,\mathcal D_Y)$ satisfy $p^*\alpha=0$, and let
$f:U\to Y$ be a plot. Since $p$ is a subduction, for every $u\in U$ there
exist an open neighborhood $V_u\subset U$ and a plot $g_u:V_u\to X$ such
that $f|_{V_u}=p\circ g_u$. By the compatibility condition for
diffeological forms,
$\alpha_f|_{V_u}=\alpha_{f|_{V_u}}
=\alpha_{p\circ g_u}=(p^*\alpha)_{g_u}=0$.
Since the sets $V_u$ cover $U$, it follows that $\alpha_f=0$. As $f$ was
arbitrary, $\alpha=0$. Hence $p^*$ is injective.

For the special case, $\pi_H$ is a subduction by the definition of the
quotient diffeology. Moreover, $\pi_H\circ R_h=\pi_H$ for every $h\in H$.
Thus, for every $\alpha\in\Omega^\bullet(X/H,\mathcal D_H)$, one has
$R_h^*(\pi_H^*\alpha)=\pi_H^*\alpha$. Therefore $\pi_H^*\alpha$ is
$H$-invariant, and the result follows from the first part.
\end{proof}

We now recall the Haefliger cocycle description of a foliation, which is the
framework used in the proof of Theorem~\ref{Hector}. We also fix the notation
for basic forms and basic cohomology.

\begin{defn}\label{foliation}
A codimension $q$ \emph{foliation} $\mathcal{F}$ on $M$ is given by a
maximal family of submersions $\{f_i\colon U_i\to\mathbb R^q\}_{i\in I}$,
where $\{U_i\}_{i\in I}$ is an open cover of $M$, such that for every
$i,j\in I$ and every $x\in U_i\cap U_j$ there is a germ of diffeomorphism
$\phi_{ji}^x$ of $\mathbb R^q$ at $f_i(x)$, with $f_j=\phi_{ji}^x\circ f_i$
on a neighborhood $V_x\subset U_i\cap U_j$ of $x$. The germs $\phi_{ji}^x$
satisfy the $1$-cocycle condition
$\phi_{ki}^x=\phi_{kj}^x\circ\phi_{ji}^x$ at every $x\in U_i\cap U_j\cap U_k$.
\end{defn}

The Haefliger cocycle describes the transverse structure of the foliation.
Equivalently, it determines the integrable tangent distribution
$T\mathcal F\subset TM$, and the next definition uses this tangent
description.

\begin{defn}\label{basic-forms}
Let $(M,\mathcal F)$ be a foliated manifold. A differential form
$\omega\in\Omega^\bullet(M)$ is called \emph{basic} if
\[
\iota_X\omega=0
\qquad\text{and}\qquad
\mathcal L_X\omega=0
\]
for every vector field $X$ tangent to $\mathcal F$. We denote the complex of
basic forms by $\Omega^\bullet(M,\mathcal F)$. Its cohomology is called the
\emph{basic cohomology} of $(M,\mathcal F)$ and is denoted by
$H^\bullet_{\mathrm{dR}}(M,\mathcal F)$.
\end{defn}

We shall use the following elementary consequence of the Baire category
argument appearing in \cite[Lemma~3.3]{HMS}. We state it in the language of
Haefliger cocycles, since this is the form needed below. If
$\{(U_a,f_a)\}_{a\in A}$ is a countable Haefliger cocycle for a codimension
$q$ foliation, we write
$Q:=\coprod_{a\in A} f_a(U_a)$ for the disjoint union of its transverse
coordinate domains.

\begin{lem}[Reformulation of {\cite[Lemma~3.3]{HMS}}]\label{HMS-lemma}
Let $\{(U_a,f_a)\}_{a\in A}$ be a countable Haefliger cocycle for a
codimension $q$ foliation, and set $Q:=\coprod_{a\in A}f_a(U_a)$.
Let $\mathscr C$ be a countable family of local diffeomorphisms between open
subsets of $Q$, and let $\sim_{\mathscr C}$ be the equivalence relation on
$Q$ generated by $\mathscr C$. Assume that whenever
$x\sim_{\mathscr C}y$, there exists $\theta\in\mathscr C$, defined near
$x$, such that $\theta(x)=y$. Let
$\pi_{\mathscr C}:Q\to Q/\!\sim_{\mathscr C}$ be the quotient map.

If $\alpha,\beta:U\to Q$ are smooth maps from an open set
$U\subset\mathbb R^n$ such that
$\pi_{\mathscr C}\circ\alpha=\pi_{\mathscr C}\circ\beta$, then there is a
countable family of open subsets $W_s\subset U$ with dense union and
elements $\theta_s\in\mathscr C$ such that
$\beta=\theta_s\circ\alpha$ on $W_s$.
\end{lem}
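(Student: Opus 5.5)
The plan is a Baire category argument on $U$. For each $\theta\in\mathscr C$, with domain $D_\theta\subset Q$, set
\[
E_\theta:=\{u\in \alpha^{-1}(D_\theta)\;:\;\beta(u)=\theta(\alpha(u))\}.
\]
Since $\alpha$, $\beta$, $\theta$ are continuous and $Q$ is Hausdorff, $E_\theta$ is relatively closed in the open set $\alpha^{-1}(D_\theta)$. Hence $E_\theta$ is a countable union of closed subsets of $U$: write $\alpha^{-1}(D_\theta)$ as a countable union of compact sets $K_m$, and note that each $E_\theta\cap K_m$ is closed in $U$.

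The first step uses the hypothesis on $\mathscr C$. For every $u\in U$ we have $\pi_{\mathscr C}(\alpha(u))=\pi_{\mathscr C}(\beta(u))$, so $\alpha(u)\sim_{\mathscr C}\beta(u)$. By assumption there is then some $\theta\in\mathscr C$, defined near $\alpha(u)$, with $\theta(\alpha(u))=\beta(u)$; that is, $u\in E_\theta$. Therefore
\[
U=\bigcup_{\theta\in\mathscr C}\bigcup_m (E_\theta\cap K_m),
\]
a countable union of closed sets, because $\mathscr C$ is countable.

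The main step applies the Baire category theorem to $U$ and to each of its open subsets, which are locally compact Hausdorff and hence Baire. Let $G$ be the union of the interiors of all the sets $E_\theta\cap K_m$. Suppose $O\subset U$ is a nonempty open set disjoint from $G$. Then $O$ is the countable union of the sets $O\cap E_\theta\cap K_m$, each closed in $O$. By Baire, one of them has nonempty interior in $O$, and that interior is open in $U$. So it lies inside the interior of some $E_\theta\cap K_m$, contradicting $O\cap G=\emptyset$. Hence $G$ is dense in $U$. Enumerating the countably many pairs $(\theta,m)$ gives open sets $W_s:=\operatorname{int}(E_{\theta_s}\cap K_{m_s})$. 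These have dense union, and on each of them $\beta=\theta_s\circ\alpha$ by the definition of $E_{\theta_s}$. Discarding the empty $W_s$ does no harm.

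The only delicate point is closedness: $E_\theta$ is closed only in the open set $\alpha^{-1}(D_\theta)$, not in $U$. The compact exhaustion handles this, and one could equally apply Baire to $\sigma$-compact pieces. The countability of the cocycle index set $A$ enters only to make $Q$ a second countable manifold, so that $\mathscr C$ being countable is meaningful and local diffeomorphisms between open subsets of $Q$ make sense. No smoothness beyond continuity is needed.
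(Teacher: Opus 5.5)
Your proof is correct, and it follows the Baire category argument of \cite[Lemma~3.3]{HMS}, which the paper cites in place of giving its own proof: you cover $U$ by the countably many coincidence sets $\{\beta=\theta\circ\alpha\}$ and use Baire to show that their interiors have dense union. Your compact exhaustion of each $\alpha^{-1}(D_\theta)$ correctly handles the one subtle point, namely that these sets are only relatively closed in $U$.
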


We now prove the equivariant version of the
HMS theorem.

\begin{thm}\label{Hector}
Let $(M,\mathcal F)$ be a foliated manifold, and let $H$ act on the right
on $(M,\mathcal F)$ by foliation automorphisms. Then the action of $H$
naturally descends to an action on the leaf space $M/\mathcal{F}$, with each
$h\in H$ acting as a diffeological diffeomorphism with respect to the quotient
diffeology $\mathcal{D}'$. Moreover, the quotient map $\pi:M\to M/\mathcal{F}$
induces a canonical $H$-equivariant isomorphism of de Rham complexes
\[
\pi^* : (\Omega^\bullet(M/\mathcal{F},\mathcal{D}'),d)
\xrightarrow{\;\cong\;}
(\Omega^\bullet(M,\mathcal{F}),d),
\]
and in particular an $H$-equivariant isomorphism
$H^\bullet_{\mathrm{dR}}(M/\mathcal{F},\mathcal{D}')\cong H^\bullet_{\mathrm{dR}}(M,\mathcal{F})$.
\end{thm}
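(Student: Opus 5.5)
The plan is to verify the formal statements (descent of the action, equivariance, well-definedness of $\pi^*$) directly, and then prove bijectivity of $\pi^*$ using a countable Haefliger cocycle together with Lemma~\ref{HMS-lemma}.

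\textbf{Step 1: descent of the action and equivariance.} Since each $R_h$ is a foliation automorphism, it maps leaves to leaves. Hence there is a well-defined map $\bar R_h:M/\mathcal F\to M/\mathcal F$ with $\bar R_h\circ\pi=\pi\circ R_h$. If $\gamma$ is a plot of $\mathcal D'$, it locally has the form $\pi\circ\alpha$, so $\bar R_h\circ\gamma$ locally has the form $\pi\circ(R_h\circ\alpha)$ and is again a plot. Thus $\bar R_h$ is smooth, with smooth inverse $\bar R_{h^{-1}}$. Equivariance of $\pi^*$ then follows formally from $(\bar R_h^*\gamma)_{\pi\circ\alpha}=\gamma_{\pi\circ R_h\circ\alpha}$, i.e.\ $\pi^*\bar R_h^*=R_h^*\pi^*$. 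Since $\pi^*$ commutes with $d$, all that remains is to show that $\pi^*$ lands in $\Omega^\bullet(M,\mathcal F)$ and is bijective onto it. Injectivity is Lemma~\ref{injective-pullback}, because $\pi$ is a subduction.

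\textbf{Step 2: the image is basic.} Let $\omega=\pi^*\gamma$, and let $X$ be a vector field tangent to $\mathcal F$ with local flow $\phi_t$. Then $\pi\circ\phi_t=\pi$, which gives $\phi_t^*\omega=\omega$ and hence $\mathcal L_X\omega=0$. To obtain $\iota_X\omega=0$, I would work in a foliation chart $U\cong\mathbb R^p\times\mathbb R^q$. There $\pi|_U=\pi\circ s\circ \mathrm{pr}_2$, where $s$ is the section $y\mapsto(0,y)$, so on $U$ the form $\omega$ equals $\mathrm{pr}_2^*(s^*\omega)$, which is horizontal.

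\textbf{Step 3: surjectivity.} Fix a countable Haefliger cocycle $\{(U_a,f_a)\}$ with local sections $s_a:f_a(U_a)\to U_a$, and set $Q=\coprod f_a(U_a)$. A basic form $\omega$ determines forms $\omega_a$ on $f_a(U_a)$ with $f_a^*\omega_a=\omega|_{U_a}$. The family $\omega_Q=\{\omega_a\}$ is invariant under the countable pseudogroup $\mathscr C$ generated by the transition germs $\phi_{ba}$; one can take countably many representatives, using second countability. Moreover $M/\mathcal F\cong Q/\!\sim_{\mathscr C}$, and every plot of $M/\mathcal F$ lifts locally to a smooth map into $Q$: lift it to $M$, then compose with some $f_a$. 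For a plot $\gamma$ with local lifts $\alpha:V\to Q$, define $\gamma^\flat_\alpha:=\alpha^*\omega_Q$.

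\textbf{Main obstacle: well-definedness of $\gamma^\flat$.} The crux is independence of the lift. Suppose $\alpha,\beta$ are two lifts, so that $\pi_{\mathscr C}\circ\alpha=\pi_{\mathscr C}\circ\beta$. By Lemma~\ref{HMS-lemma}, $\beta=\theta_s\circ\alpha$ on open sets $W_s$ with dense union. Invariance of $\omega_Q$ under $\theta_s$ then gives $\beta^*\omega_Q=\alpha^*\omega_Q$ on each $W_s$, and by continuity this equality holds on all of $V$. The hypothesis of Lemma~\ref{HMS-lemma} requires that equivalent points be related by a single element of $\mathscr C$. I would arrange this by closing $\mathscr C$ under compositions and restrictions, which keeps it countable, and by using that holonomy along a leaf path is realized by a finite chain of transition maps. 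Once lifts are shown to be independent, the forms agree on overlaps of lifts, so they glue to a diffeological form $\gamma^\flat$. Its compatibility condition~\eqref{compatibility} is automatic, and $\pi^*\gamma^\flat=\omega$ holds because locally $\pi=\pi_{\mathscr C}\circ f_a$.
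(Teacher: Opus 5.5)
Your proposal is correct and takes essentially the same route as the paper. Injectivity comes from Lemma~\ref{injective-pullback}, and basicness from the local factorization of $\pi$ through a transverse submersion and a section; this is the paper's $\pi|_{U_i}=q_i\circ f_i$ with $q_i=\pi\circ s_i$. Surjectivity is obtained by defining the form plotwise via local lifts into $Q=\coprod f_a(U_a)$. Independence of the lift then follows from Lemma~\ref{HMS-lemma} applied to the countable family of compositions and restrictions of transition maps, together with holonomy invariance of the transverse forms and density.

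One point should be made explicit in Step~3. The cocycle should come from simple foliated charts as in your Step~2, so that each fiber of $f_a$ is a single plaque. This guarantees that the forms $\omega_a$ exist and that $\pi|_{U_a}$ factors through $f_a$. The paper's choice of sections $s_i$ plays the same role.
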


\begin{proof}
Since each $h\in H$ maps leaves of $\mathcal F$ to leaves, the action of
$H$ on $M$ descends naturally to an action on $M/\mathcal F$. Moreover,
each induced map on $M/\mathcal F$ is a diffeological diffeomorphism with
respect to the quotient diffeology $\mathcal D'$. We now prove the remaining
assertions in several steps.

\begin{itemize}

\item[i)] Since $\pi:M\to M/\mathcal F$ is a subduction,
Lemma~\ref{injective-pullback} implies that pullback by $\pi$ is injective.
It remains to show that its image lies in $\Omega^\bullet(M,\mathcal F)$.

Since $M$ is second countable, we may choose a countable refinement
$\{f_i:U_i\to f_i(U_i)\subset\mathbb R^q\}_{i\in J}$ of the Haefliger
cocycle ... such that $\{U_i\}_{i\in J}$ covers $M$ and each $f_i$ admits a
section $s_i:f_i(U_i)\to U_i$ for which $x$ and $s_i(f_i(x))$ lie on the
same leaf for every $x\in U_i$. Set
$q_i:=\pi\circ s_i:f_i(U_i)\to M/\mathcal F$. Then
$q_i\circ f_i=\pi|_{U_i}$.

Set $\mathcal C:=\{q_i:f_i(U_i)\to M/\mathcal F\}_{i\in J}$. Then
$\mathcal C$ is a generating family for the quotient diffeology
$\mathcal D'$ on $M/\mathcal F$. Indeed, if $f:O\to M/\mathcal F$ is a plot
and $x\in O$, then, after shrinking to a neighborhood $V$ of $x$, there is a
smooth lift $\widetilde f:V\to M$ such that
$f|_V=\pi\circ\widetilde f$. After shrinking $V$ further, we may assume that
$\widetilde f(V)\subset U_i$ for some $i\in J$. Then
$f|_V=\pi\circ\widetilde f=q_i\circ f_i\circ\widetilde f$. Thus every plot
of $M/\mathcal F$ locally factors through one of the maps $q_i$.

Hence, for $\alpha\in\Omega^\bullet(M/\mathcal F,\mathcal D')$, we have
$(\pi^*\alpha)|_{U_i}=f_i^*\alpha_{q_i}$. Since $f_i$ is a submersion
defining the foliation on $U_i$, the form $f_i^*\alpha_{q_i}$ is basic on
$U_i$. Thus $\pi^*\alpha$ is basic on each $U_i$, and therefore
$\pi^*\alpha\in\Omega^\bullet(M,\mathcal F)$.

\item[ii)] We next show that $\pi^*$ is $H$-equivariant. Let $h\in H$, and
let $\overline{R_h}:M/\mathcal F\to M/\mathcal F$ be the induced diffeomorphism of
the leaf space, so that $\pi\circ R_h=\overline{R_h}\circ\pi$. By functoriality of
pullback, for every $\alpha\in\Omega^\bullet(M/\mathcal F,\mathcal D')$,
\[
(R_h)^*\pi^*\alpha=(\pi\circ R_h)^*\alpha
=(\overline{R_h}\circ\pi)^*\alpha=\pi^*(\overline{R_h})^*\alpha.
\]
Thus $\pi^*$ is $H$-equivariant.

\item[iii)] Finally, we show that $\pi^*$ is surjective. Let
$\gamma\in\Omega^k(M,\mathcal F)$ be a basic $k$-form. We construct a
diffeological $k$-form $\alpha$ on $M/\mathcal F$ such that
$\pi^*\alpha=\gamma$.

For each $i\in J$, since $f_i$ is a submersion and
$\gamma|_{U_i}$ is basic, there is a unique form
$\alpha_{q_i}\in\Omega^k(f_i(U_i))$ satisfying
$f_i^*\alpha_{q_i}=\gamma|_{U_i}$; equivalently,
$\alpha_{q_i}=s_i^*\gamma$.

Let $f:O\to M/\mathcal F$ be a plot in $\mathcal D'$. Since $\mathcal C$ is
a generating family, there is an open cover $O=\bigcup_\lambda O_\lambda$
such that
$f|_{O_\lambda}=q_{i_\lambda}\circ h_\lambda$ for some $i_\lambda\in J$ and
some smooth map $h_\lambda:O_\lambda\to f_{i_\lambda}(U_{i_\lambda})$.
Define $\alpha_f|_{O_\lambda}:=h_\lambda^*\alpha_{q_{i_\lambda}}$.

We claim that these local definitions agree on overlaps. Suppose that on
$V:=O_\lambda\cap O_\mu$ we have
$f|_V=q_i\circ h_i=q_j\circ h_j$, with
$h_i:V\to f_i(U_i)$ and $h_j:V\to f_j(U_j)$. Set
$Q:=\coprod_{a\in J}f_a(U_a)$. Choose a fixed countable basis of the
transverse coordinate domains, and let $\mathscr H$ be the countable family
of local holonomy transformations obtained from the Haefliger transition
maps and their inverses by taking finite compositions and restricting
domains and ranges to this basis. This family locally generates the
holonomy equivalence relation on $Q$.

Regard $h_i$ and $h_j$ as maps into $Q$. The equality
$q_i\circ h_i=q_j\circ h_j$ implies that their values are pointwise
holonomy equivalent. Hence, by Lemma~\ref{HMS-lemma}, there is a countable
family of open sets $W_s\subset V$ with dense union such that
$h_j=\phi^s\circ h_i$ on $W_s$ for some $\phi^s\in\mathscr H$.

Since $\gamma$ is basic, the transverse forms $\alpha_{q_i}$ are invariant
under holonomy. Indeed, for an elementary Haefliger transition
$\phi:D\subset f_i(U_i)\to f_j(U_j)$, on an appropriate overlap $A$ one has
$f_j|_A=\phi\circ f_i|_A$, and hence
\[
(f_i|_A)^*\phi^*\alpha_{q_j}
=(f_j|_A)^*\alpha_{q_j}
=\gamma|_A
=(f_i|_A)^*\alpha_{q_i}.
\]
Since $f_i|_A$ is a submersion, its pullback is injective, so
$\phi^*\alpha_{q_j}=\alpha_{q_i}$. The same identity holds for restrictions,
inverses, and finite compositions. Therefore, on each $W_s$,
$h_j^*\alpha_{q_j}=h_i^*(\phi^s)^*\alpha_{q_j}=h_i^*\alpha_{q_i}$.
Since $\bigcup_sW_s$ is dense in $V$, the equality extends to all of $V$.

Thus the local forms $h_\lambda^*\alpha_{q_{i_\lambda}}$ glue to a
well-defined form $\alpha_f\in\Omega^k(O)$. The construction is independent
of the chosen factorizations by the same overlap argument.

It remains to check compatibility with reparametrization. Let
$\varphi:O'\to O$ be smooth. If
$f|_{O_\lambda}=q_{i_\lambda}\circ h_\lambda$, then
$(f\circ\varphi)|_{\varphi^{-1}(O_\lambda)}
=q_{i_\lambda}\circ(h_\lambda\circ\varphi)$. Therefore
\[
\alpha_{f\circ\varphi}|_{\varphi^{-1}(O_\lambda)}
=(h_\lambda\circ\varphi)^*\alpha_{q_{i_\lambda}}
=\varphi^*(h_\lambda^*\alpha_{q_{i_\lambda}})
=\varphi^*(\alpha_f|_{O_\lambda}).
\]
Hence $\alpha_{f\circ\varphi}=\varphi^*\alpha_f$, so
$\alpha=\{\alpha_f\}$ is a diffeological $k$-form on $M/\mathcal F$.

Finally, for each $i\in J$ we have $\pi|_{U_i}=q_i\circ f_i$, and hence
$(\pi^*\alpha)|_{U_i}=f_i^*\alpha_{q_i}=\gamma|_{U_i}$. Since the sets
$U_i$, $i\in J$, cover $M$, it follows that $\pi^*\alpha=\gamma$.
Therefore $\pi^*$ is surjective.
\end{itemize}

Thus $\pi^*$ is an $H$-equivariant isomorphism of de Rham complexes. The
cohomological statement follows.
\end{proof}

\section{Cohomology of quotients by locally free Lie group actions}\label{sec:CZthm}

In this section we establish the second main result of the paper, concerning
quotients $M/H$ by possibly disconnected Lie group actions. More precisely,
fix a Lie group $H$, not necessarily second countable, and a second countable
manifold $M$ equipped with a smooth, locally free right $H$-action.

When $H$ is a Lie group in the classical sense, i.e. its underlying manifold
is second countable, the component group $H/H_0$ is countable, so the
Haefliger-cocycle argument used in Theorem~\ref{Hector} can be adapted. If
one allows Lie groups whose underlying manifolds are not second countable,
this countability argument is no longer available; in that setting we impose
a natural subduction condition on the action of $H/H_0$ on $M/H_0$.

We begin by recalling the canonical Lie group structure carried by an
arbitrary subgroup of a Lie group. This is the setting used by
Clark--Ziegler and clarifies the notion of subgroup used below.

\begin{prop}[Bourbaki {\cite[Chapter~III, \S 4.5, Proposition~9]{Bour89}}]\label{Bour}
Let $H$ be an arbitrary subgroup of a Lie group $G$. Then $H$ admits a unique
manifold structure for which:
\begin{itemize}
\item[(1)] the inclusion map $i:H\hookrightarrow G$ is an immersion;
\item[(2)] for every manifold $N$, a map $F:N\to H$ is smooth if and only if
$i\circ F:N\to G$ is smooth.
\end{itemize}
With this structure, $H$ is a Lie group with Lie algebra
\[
\mathfrak h=\{Z\in \mathfrak g:\exp(tZ)\in H \text{ for all } t\in \mathbb R\}.
\]
\end{prop}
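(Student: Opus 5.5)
We prove Bourbaki's proposition by topologizing $H$ with a finer topology built from the subgroup's own one-parameter subgroups. We then show that this topology carries a manifold structure with the required universal property, and that uniqueness follows from that property.

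\emph{Step 1: the Lie algebra.} Set $\mathfrak h:=\{Z\in\mathfrak g:\exp(tZ)\in H\ \forall t\in\mathbb R\}$. Closure under scalars is immediate. Closure under addition and brackets comes from the Lie product formulas
\[
\exp(t(X+Y))=\lim_{n}\bigl(\exp(tX/n)\exp(tY/n)\bigr)^n
\]
and its commutator analogue. These limits are not automatically in $H$, since $H$ need not be closed, so one cannot take limits naively.

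The fix, which I expect to be the main obstacle, is to use instead the path-connected subgroup $H'$ generated by $\exp(\mathfrak h)$. Let $\mathfrak a$ be the span of $\mathfrak h$, and let $\mathfrak l$ be the Lie algebra generated by $\mathfrak a$, with corresponding connected immersed subgroup $L$. I would show $L\subset H$ and that every $Z\in\mathfrak l$ has $\exp(tZ)\in H$, so that $\mathfrak l=\mathfrak h$. For this, use the standard fact that, for a basis $Z_1,\dots,Z_m$ of $\mathfrak a$ taken from $\mathfrak h$, the products $\exp(t_1Z_{j_1})\cdots\exp(t_NZ_{j_N})$ cover a neighborhood of the identity in $L$ (Yamabe/Kuranishi-type argument: arcwise connected subgroups are Lie subgroups). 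These products lie in $H$, hence $L\subset H$, and so $\exp(\mathfrak l)\subset H$, giving $\mathfrak l\subset\mathfrak h$. Thus $\mathfrak h$ is a Lie subalgebra, and $H_0:=L$ is the connected immersed Lie subgroup integrating it.

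\emph{Step 2: the manifold structure.} Give $H$ the structure making each coset $hH_0$ an open submanifold diffeomorphic to $H_0$ via left translation. Equivalently, take charts $Z\mapsto h\exp(Z)$ for $Z$ in a small neighborhood of $0$ in $\mathfrak h$. Transition maps are smooth because $H_0$ is an immersed Lie subgroup and conjugation by $h\in H$ preserves $\mathfrak h$. Preservation holds since $h\exp(tZ)h^{-1}=\exp(t\,\mathrm{Ad}_hZ)\in H$, so $\mathrm{Ad}_h\mathfrak h=\mathfrak h$. Group operations are smooth, and the inclusion is an immersion since it is one on $H_0$. The resulting manifold is generally not second countable, with $H/H_0$ discrete; this is consistent with the paper's conventions.

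\emph{Step 3: property (2) and uniqueness.} Let $F:N\to H$ have $i\circ F$ smooth, and fix $x\in N$. Work locally on a connected chart $V$ near $x$ and replace $F$ by $F(x)^{-1}F$. Any smooth curve $c$ in $G$ lying in $H$ has $\dot c(t)\in dL_{c(t)}\mathfrak h$. To see this, note that $c(s)^{-1}c(t)\in H$, and use a slice argument: in a submanifold chart of $G$ adapted to the left-invariant foliation by $H_0$-cosets, $H$ meets each plaque-transversal in a countable set. Countability holds because $H$ is a countable union of $H_0$-cosets locally, which one verifies using the fact that near $e$ only countably many plaques meet $H$ in each compact set. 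A continuous map from connected $V$ into a countable transversal set is constant, so $F(V)$ lies in a single leaf $H_0$. Smoothness into the leaf then follows from the standard fact that a smooth map into $G$ with image in an immersed integral leaf is smooth into the leaf.

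Uniqueness follows formally. If two structures on $H$ both satisfy (1) and (2), the identity map is smooth in each direction by (2), hence a diffeomorphism.
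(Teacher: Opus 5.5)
The paper gives no proof of this proposition; it quotes Bourbaki, so your outline has to stand on its own. Steps~1 and~2 and the uniqueness argument are fine, granting in Step~1 the standard fact that the subgroup generated by the one-parameter groups $\exp(\mathbb R Z)$, $Z\in\mathfrak h$, is the integral subgroup of the Lie algebra they generate. The gap is in Step~3, which has to establish property~(2). Your claim that $H$ meets each transversal of a foliation chart for the $H_0$-cosets in a countable set is false. It also conflicts with your own remark that $H$ need not be second countable.

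For a counterexample, take $G=\mathbb R$ and $H=V$ an uncountable proper $\mathbb Q$-subspace, or the subgroup of Example~\ref{ex:non-second-countable-nondense}. Then $\mathfrak h=0$, $H_0=\{0\}$, and the plaques are points. Moreover, $V\cap(-\varepsilon,\varepsilon)$ is uncountable for every $\varepsilon>0$, because $V=\bigcup_{n\geq1}n\bigl(V\cap(-\varepsilon,\varepsilon)\bigr)$. So the step ``a continuous map from a connected set into a countable set is constant'' is unavailable. Nothing else in your argument shows that a smooth curve $c$ in $G$ with values in $H$ satisfies $\dot c(t)\in dL_{c(t)}\mathfrak h$, and that is exactly what~(2) requires. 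In coordinates $\exp(X)\exp(Y)$ with $Y\in\mathfrak h$ and $X$ in a complement of $\mathfrak h$, the transverse set $\{X:\exp X\in H\}$ can be uncountable. What must be shown is that it contains no nonconstant smooth arc, and that is a group-theoretic fact, not a cardinality fact.

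One way to close the gap is to work with velocities of curves from the start:
\begin{itemize}
\item[(i)] Let $\mathfrak h'$ be the set of $\dot c(0)$ for smooth curves $c$ in $G$ with $c(0)=e$ and values in $H$. Products $c_1(t)c_2(t)$ and reparametrizations make $\mathfrak h'$ a linear subspace.
\item[(ii)] Conjugating curves gives $\mathrm{Ad}(h)\mathfrak h'=\mathfrak h'$ for $h\in H$. Differentiating $s\mapsto\mathrm{Ad}(c_2(s))\dot c_1(0)\in\mathfrak h'$ at $s=0$ then gives closure under brackets.
\item[(iii)] Choose $c_1,\dots,c_m$ whose velocities form a basis of $\mathfrak h'$. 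By $\mathrm{Ad}(H)$-stability, $\phi(t_1,\dots,t_m)=c_1(t_1)\cdots c_m(t_m)$ is tangent to the left-invariant distribution $g\mapsto dL_g\mathfrak h'$. Hence $\phi$ maps into the integral subgroup $H'$, and by the inverse function theorem onto a neighborhood of $e$ in $H'$. Since $H'$ is connected, $H'\subset H$.
\item[(iv)] Thus $\exp(\mathbb R Z)\subset H$ for $Z\in\mathfrak h'$, so $\mathfrak h'=\mathfrak h$ (the other inclusion is trivial) and $H'=H_0$.
\item[(v)] Now let $F:N\to G$ be smooth with values in $H$. For each $x$ and each curve $\gamma$ through $x$, the curve $s\mapsto F(x)^{-1}F(\gamma(s))$ is of the type in~(i). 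So $dF$ is tangent to $g\mapsto dL_g\mathfrak h$, and $F$ maps each connected open set into a single coset $hH_0$ and is smooth into it.
\end{itemize}
This route also proves Step~1 directly.

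Alternatively, since you already invoke Yamabe, you can use it here. The subgroup generated by an arc of $H$ through $e$ is arcwise connected, hence an integral subgroup $A$ with $\exp(\mathrm{Lie}\,A)\subset H$. Therefore $A\subset H_0$.
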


Henceforth, whenever $H$ is a subgroup of a Lie group $G$, we regard $H$ as
endowed with the canonical Lie group structure of Proposition~\ref{Bour}. In
particular, $H_0$ denotes the identity component of $H$ with respect to this
intrinsic Lie group topology, and $H/H_0$ denotes its component group. Unless
explicitly stated otherwise, whenever $\overline H$ appears, the closure is
taken in the ambient Lie group $G$. For further background on this intrinsic
Lie group structure and the associated terminology, see also Appendix~A of
\cite{CZ26}.

Let $H$ be a Lie group, not necessarily second countable, acting smoothly
and locally freely on the right on a second countable manifold $M$. Let
$H_0$ be the identity component of $H$, and let $\mathcal F$ be the
foliation by the $H_0$-orbits. Set $X:=M/H_0$ and $K:=H/H_0$, and let
$\pi_0:M\to X$ and $\rho:X\to X/K\cong M/H$ be the quotient maps. The group
$K$ acts on $X$ on the right. The following descent lemma for the component
group action is a crucial step in the proof of Theorem~\ref{M/H}.

\begin{lem}\label{component-descent}
With the notation above, assume that one of the following two conditions
holds:
\begin{itemize}
\item[(a)] $H$ is second countable.
\item[(b)] The canonical map
\begin{equation}\label{canonical-map}
\Delta:X\times K\longrightarrow X\times_{X/K}X,\qquad
(x,k)\longmapsto (x,x\cdot k),
\end{equation}
is a subduction, where $X\times K$ has the product diffeology and
$X\times_{X/K}X$ has the subset diffeology inherited from $X\times X$.
\end{itemize}
Then pullback by $\rho$ induces an isomorphism of cochain complexes
\[
\rho^*:\Omega^\bullet(X/K)\xrightarrow{\;\cong\;}\Omega^\bullet(X)^K.
\]
\end{lem}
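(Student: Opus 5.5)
Injectivity and the inclusion of the image in $\Omega^\bullet(X)^K$ follow from Lemma~\ref{injective-pullback} applied to the quotient $\rho:X\to X/K$. The work is surjectivity. Given $\omega\in\Omega^k(X)^K$, I will define $\alpha$ on $X/K$ plotwise. Let $f:U\to X/K$ be a plot. Locally $f|_V=\rho\circ g$ for a plot $g:V\to X$, and I set $\alpha_f|_V:=\omega_g$. If these local definitions are consistent, compatibility with reparametrization is immediate from the corresponding property of $\omega$, and $\rho^*\alpha=\omega$ by construction. The whole proof therefore reduces to one \emph{key claim}: if $g_1,g_2:V\to X$ are plots with $\rho\circ g_1=\rho\circ g_2$, then $\omega_{g_1}=\omega_{g_2}$.

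\emph{Case (b).} The map $(g_1,g_2):V\to X\times_{X/K}X$ is a plot for the subset diffeology. Since $\Delta$ is a subduction, each point of $V$ has a neighborhood $W$ and a plot $(g,\kappa):W\to X\times K$ with $g_1|_W=g$ and $g_2|_W=g\cdot\kappa$. Here $K$ carries the quotient diffeology from $H$, which is discrete because $H_0$ is open. So $\kappa$ is locally constant, and after shrinking $W$ we have $g_2|_W=R_k\circ g_1|_W$ for a fixed $k\in K$. Then $\omega_{g_2}|_W=(k^*\omega)_{g_1|_W}=\omega_{g_1}|_W$ by $K$-invariance. This settles the key claim.

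\emph{Case (a).} Here $K$ is countable, and I will reduce to the Haefliger-cocycle setting of Theorem~\ref{Hector}. Using Theorem~\ref{Hector}, identify $\omega$ with an $H$-invariant basic form $\gamma=\pi_0^*\omega$ on $M$. I take the countable cocycle $\{(U_i,f_i,s_i)\}_{i\in J}$ and the maps $q_i$ from that proof, and replace the holonomy family $\mathscr H$ by the countable family $\mathscr C$ generated by $\mathscr H$ together with the local transverse maps induced by the elements $h$ of a countable set of representatives of $K$. Concretely, each such $h$ contributes the maps $f_j\circ R_h\circ s_i$ on suitable basic open sets. Countability of $\mathscr C$ uses both the countability of $K$ and the second countability of $M$.

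The equivalence relation $\sim_{\mathscr C}$ on $Q$ is exactly "same $H$-orbit", and any two equivalent points are related by a single element of $\mathscr C$, as Lemma~\ref{HMS-lemma} requires. Lifting $g_1,g_2$ locally through the generating family $\{q_i\}$ gives maps $h_1,h_2$ into $Q$ with the same image in $Q/\!\sim_{\mathscr C}=M/H$. Lemma~\ref{HMS-lemma} then yields $h_2=\theta_s\circ h_1$ on open sets $W_s$ with dense union. The transverse forms $s_i^*\gamma$ are invariant under every element of $\mathscr C$, by the injectivity-of-submersion argument in step iii) of Theorem~\ref{Hector} combined with $R_h^*\gamma=\gamma$. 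Hence $\omega_{g_1}=\omega_{g_2}$ on a dense set, and so everywhere by continuity.

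I expect the main obstacle to be the bookkeeping in case (a). One must show that the enlarged family $\mathscr C$ is countable, closed enough under composition and inversion to satisfy the single-element hypothesis of Lemma~\ref{HMS-lemma}, and that it generates exactly the $H$-orbit relation. In comparison, case (b) is essentially formal once the discreteness of the diffeology on $K$ is noted.
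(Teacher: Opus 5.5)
Your proposal is correct and follows the paper's proof essentially step for step. It uses the same reduction to showing that two local lifts $g_1,g_2$ of a plot of $X/K$ give $\omega_{g_1}=\omega_{g_2}$, the same local-constancy argument for $\kappa$ in case (b), and in case (a) the same enlarged countable family of holonomy maps and $R_{h_k}$-induced transverse maps fed into Lemma~\ref{HMS-lemma}. The only cosmetic difference is in case (a): you check invariance of the transverse forms $s_i^*\gamma$ on $Q$ via $R_h^*\gamma=\gamma$, whereas the paper attaches to each $\theta\in\mathscr C_K$ an element $k_\theta$ with $q_b\circ\theta=\bar R_{k_\theta}\circ q_a$ and then uses $K$-invariance of $\omega$ on $X$ directly.
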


\begin{proof}
Let $\mathcal F$ denote the regular foliation of $M$ by the $H_0$-orbits, so
that $X=M/H_0=M/\mathcal F$. Since $\rho:X\to X/K$ is a subduction by the
definition of the quotient diffeology, $\rho^*$ is injective by
Lemma~\ref{injective-pullback}. It remains to prove surjectivity.

Let $\omega\in\Omega^\bullet(X)^K$. We construct a diffeological form
$\gamma\in\Omega^\bullet(X/K)$ such that $\rho^*\gamma=\omega$. Let
$f:U\to X/K$ be a plot. Since $\rho$ is a subduction, there exist an open
cover $U=\bigcup_iU_i$ and plots $g_i:U_i\to X$ such that
$f|_{U_i}=\rho\circ g_i$. Define
$\gamma_f|_{U_i}:=\omega_{g_i}$. We show that these local forms are
independent of the chosen lifts and agree on overlaps.

For $k\in K$, let $\bar R_k:X\to X$ denote the induced right action of $k$ on
$X=M/H_0$. If $h\in H$ represents $k$, then
$\bar R_k\circ\pi_0=\pi_0\circ R_h$.

First assume condition~(a). Since $H_0$ is open in $H$ and $H$ is second
countable, the discrete quotient $K=H/H_0$ is countable. Since $M$ is second
countable, the foliation $\mathcal F$ admits a countable Haefliger cocycle.
Choose such a cocycle $\{(U_a,f_a)\}_{a\in A}$ and refine it so that each
$f_a$ admits a section $s_a:f_a(U_a)\to U_a$. Set
$q_a:=\pi_0\circ s_a:f_a(U_a)\to X$. The family $\{q_a\}_{a\in A}$
generates the quotient diffeology on $X$.

Choose a set-theoretic section $\sigma:K\to H$ of the quotient map
$H\to K$, and write $h_k:=\sigma(k)$. Since the identity component $H_0$ is
normal in $H$, each right translation $R_{h_k}:M\to M$ maps $H_0$-orbits to
$H_0$-orbits. It therefore preserves the foliation $\mathcal F$ and descends
to $\bar R_k$ on $X$. After restricting the foliation charts if necessary,
the maps $R_{h_k}$ induce local diffeomorphisms between transverse coordinate
domains.

Let $\mathscr C_K$ be the countable family of local transformations obtained
from the ordinary Haefliger transitions and these induced transverse maps by
taking finite compositions and restricting their domains and ranges to a
fixed countable basis of the transverse coordinate domains. This family
locally generates the equivalence relation on
$\coprod_a f_a(U_a)$ induced by the map to $X/K$. Moreover, for each
$\theta:D\subset f_a(U_a)\to E\subset f_b(U_b)$ in $\mathscr C_K$, after
possibly restricting $D$, there exists $k_\theta\in K$ such that
$q_b\circ\theta=\bar R_{k_\theta}\circ q_a$ on $D$.

Let $g,g':V\to X$ be two local lifts of the same plot of $X/K$, so that
$\rho\circ g=\rho\circ g'$ on $V$. Fix $v_0\in V$. Since the maps $q_a$
generate the diffeology of $X$, after replacing $V$ by a neighborhood of
$v_0$, we may write $g=q_a\circ u$ and $g'=q_b\circ u'$ for smooth maps
$u:V\to f_a(U_a)$ and $u':V\to f_b(U_b)$. By the local-generation property
of $\mathscr C_K$, for every $v\in V$ there exists
$\theta\in\mathscr C_K$, defined near $u(v)$, such that
$u'(v)=\theta(u(v))$.

Applying Lemma~\ref{HMS-lemma} to the countable family $\mathscr C_K$, we
obtain open subsets $W_s\subset V$ with dense union and elements
$\theta_s\in\mathscr C_K$ such that $u'=\theta_s\circ u$ on $W_s$. Let
$k_s:=k_{\theta_s}$. Then
$g'|_{W_s}=\bar R_{k_s}\circ g|_{W_s}$, and the $K$-invariance of $\omega$
gives $\omega_{g'}|_{W_s}=\omega_g|_{W_s}$. Since $\bigcup_sW_s$ is dense
in $V$ and both sides are smooth forms, the equality extends to all of $V$.
Thus any two local lifts of the same plot of $X/K$ determine the same
pullback of $\omega$.

Now assume condition~(b). Let $g,g':V\to X$ be two local lifts of the same
plot of $X/K$. Then $(g,g'):V\to X\times_{X/K}X$ is a plot. Since $\Delta$
is a subduction, near every point of $V$ there exist an open set $W\subset V$
and a plot $(\widetilde g,\kappa):W\to X\times K$ lifting
$(g,g')|_W$. Comparing first components gives
$\widetilde g=g|_W$, and hence
$g'(w)=g(w)\cdot\kappa(w)$ for every $w\in W$. Since $K$ is discrete,
after shrinking $W$ we may assume that $\kappa$ is constant, say
$\kappa=k$. Thus $g'|_W=\bar R_k\circ g|_W$, and the $K$-invariance of
$\omega$ gives $\omega_{g'}|_W=\omega_g|_W$. Hence
$\omega_{g'}=\omega_g$ locally on $V$, and therefore on all of $V$.

In either case, any two local lifts of the same plot of $X/K$ give the same
pullback of $\omega$. Applying this to $g_i|_{U_i\cap U_j}$ and
$g_j|_{U_i\cap U_j}$ shows that the local forms $\omega_{g_i}$ and
$\omega_{g_j}$ agree on overlaps. They therefore glue to a well-defined form
$\gamma_f$ on $U$, independent of the chosen open cover and local lifts.

It remains to check compatibility with reparametrization. Let
$\varphi:V\to U$ be smooth. If $f|_{U_i}=\rho\circ g_i$, then
$(f\circ\varphi)|_{\varphi^{-1}(U_i)}
=\rho\circ(g_i\circ\varphi)$. Consequently,
$\gamma_{f\circ\varphi}|_{\varphi^{-1}(U_i)}
=\omega_{g_i\circ\varphi}
=\varphi^*\omega_{g_i}
=\varphi^*(\gamma_f|_{U_i})$.
Thus $\gamma_{f\circ\varphi}=\varphi^*\gamma_f$, and
$\gamma=\{\gamma_f\}$ defines a diffeological form on $X/K$.

Finally, let $g:U\to X$ be a plot. For the plot $\rho\circ g$, we may use
$g$ as a global lift, so $\gamma_{\rho\circ g}=\omega_g$. Hence
$(\rho^*\gamma)_g=\omega_g$ for every plot $g$ of $X$, and therefore
$\rho^*\gamma=\omega$. This proves surjectivity.
\end{proof}

We are ready to prove the second main result of this paper.

\begin{thm}\label{M/H}
Fix a Lie group $H$, not necessarily second countable, and a second
countable manifold $M$ equipped with a smooth, locally free right
$H$-action. Write $H_0$ for the identity component of $H$, and let
$\mathcal F$ be the foliation by the $H_0$-orbits. Set $X:=M/H_0$ and
$K:=H/H_0$. Assume that either
\begin{itemize}
\item[(a)] $H$ is second countable;
\item[(b)] the canonical map~\eqref{canonical-map} is a subduction, where
$X\times K$ has the product diffeology and $X\times_{X/K}X$ has the subset
diffeology inherited from $X\times X$.
\end{itemize}
Then pullback by $\pi_H:M\to M/H$ induces a natural isomorphism of cochain
complexes
\[
\pi_H^*:\Omega^\bullet(M/H)\xrightarrow{\;\cong\;}
\Omega^\bullet(M,\mathcal F)^H,
\]
and consequently a natural isomorphism in cohomology
\[
H^\bullet_{\mathrm{dR}}(M/H)\cong
H^\bullet\bigl(\Omega^\bullet(M,\mathcal F)^H,d\bigr).
\]
\end{thm}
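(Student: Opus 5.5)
The plan is to factor $\pi_H$ through the intermediate quotient $X=M/H_0$. Write $\pi_H=\rho\circ\pi_0$, where $\pi_0:M\to X$ and $\rho:X\to X/K$. Before anything else, I will identify $X/K$ with $M/H$ as diffeological spaces: the natural bijection $X/K\to M/H$ is a diffeomorphism of quotient diffeologies. This holds because a composite of subductions is a subduction, so $\rho\circ\pi_0$ is a subduction onto $X/K$, and it induces the same equivalence relation on $M$ as $\pi_H$. Granting this, pullback satisfies
\[
\pi_H^*=\pi_0^*\circ\rho^*,
\]
and the proof reduces to showing that each factor is an isomorphism onto the appropriate subcomplex.

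For $\pi_0^*$, I will apply Theorem~\ref{Hector} to the foliation $\mathcal F$ by $H_0$-orbits, with the group $H$ acting on the right. Since $H_0$ is normal in $H$, each $R_h$ carries $H_0$-orbits to $H_0$-orbits, so $H$ acts by foliation automorphisms. Theorem~\ref{Hector} then says that $\pi_0^*:\Omega^\bullet(X)\to\Omega^\bullet(M,\mathcal F)$ is an $H$-equivariant isomorphism. The induced $H$-action on $X$ factors through $K$, because $H_0$ fixes every point of $X$. Taking invariants on both sides therefore gives an isomorphism
\[
\Omega^\bullet(X)^K=\Omega^\bullet(X)^H\xrightarrow{\;\cong\;}\Omega^\bullet(M,\mathcal F)^H .
\]
Equivariance is used in both directions here: if $\pi_0^*\omega$ is $H$-invariant, then $\pi_0^*(\bar R_h^*\omega-\omega)=0$, and injectivity of $\pi_0^*$ forces $\bar R_h^*\omega=\omega$.

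For $\rho^*$, Lemma~\ref{component-descent} applies under either hypothesis (a) or (b) and gives an isomorphism $\rho^*:\Omega^\bullet(X/K)\to\Omega^\bullet(X)^K$. Composing the two isomorphisms yields $\pi_H^*$ as an isomorphism onto $\Omega^\bullet(M,\mathcal F)^H$. Both maps are pullbacks and so commute with $d$; the composite is therefore a cochain isomorphism, and the statement in cohomology follows. Naturality is also inherited from the functoriality of pullback.

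The only step that needs real care is identifying $X/K$ with $M/H$ diffeologically. I will verify it directly. The quotient diffeology on $M/H$ consists of the maps that locally lift to $M$. Any plot of $X/K$ locally lifts to $X$, and then to $M$ because $\pi_0$ is a subduction; conversely, any map that locally lifts to $M$ locally lifts to $X$ by composing with $\pi_0$. Hence the two diffeologies coincide.
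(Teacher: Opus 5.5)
Your proposal is correct and follows essentially the same route as the paper: factor $\pi_H=\rho\circ\pi_0$, use Theorem~\ref{Hector} together with injectivity of $\pi_0^*$ to identify $\Omega^\bullet(X)^K$ with $\Omega^\bullet(M,\mathcal F)^H$, then apply Lemma~\ref{component-descent} to $\rho^*$ and compose. Your explicit verification that $X/K\cong M/H$ as diffeological spaces, via composition of subductions, fills in a step that the paper simply asserts.
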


\begin{proof}
Let $\pi_0:M\to X=M/H_0$ and $\rho:X\to X/K\cong M/H$ be the quotient maps,
so that $\pi_H=\rho\circ\pi_0$. Since $H_0$ is normal in $H$, the right
$H$-action on $M$ descends to a right $K$-action on $X$.

Applying Theorem~\ref{Hector} to the foliation $\mathcal F$, pullback by
$\pi_0$ gives an isomorphism of cochain complexes
\[
\pi_0^*:\Omega^\bullet(X)\xrightarrow{\;\cong\;}
\Omega^\bullet(M,\mathcal F).
\]
This isomorphism identifies $K$-invariant forms on $X$ with $H$-invariant
$\mathcal F$-basic forms on $M$. Indeed, if $h\in H$ and $\bar h\in K$
denotes its class, then $\pi_0\circ R_h=R_{\bar h}\circ\pi_0$. Hence, for
$\beta\in\Omega^\bullet(X)$, one has
$R_h^*(\pi_0^*\beta)=\pi_0^*(R_{\bar h}^*\beta)$. Since $\pi_0$ is a
subduction, $\pi_0^*$ is injective, and therefore $\pi_0^*\beta$ is
$H$-invariant if and only if $\beta$ is $K$-invariant. Thus
$\Omega^\bullet(M,\mathcal F)^H\cong \Omega^\bullet(X)^K$.

By Lemma~\ref{component-descent}, under either hypothesis (a) or (b),
pullback by $\rho$ induces an isomorphism
\[
\rho^*:\Omega^\bullet(X/K)\xrightarrow{\;\cong\;}\Omega^\bullet(X)^K.
\]
Combining this with the preceding isomorphism and using
$X/K\cong M/H$ and $\pi_H=\rho\circ\pi_0$, we obtain
\[
\pi_H^*:\Omega^\bullet(M/H)\xrightarrow{\;\cong\;}
\Omega^\bullet(M,\mathcal F)^H.
\]
Since pullback commutes with the exterior differential, this is an
isomorphism of cochain complexes. The cohomological statement follows.
\end{proof}

The subduction condition on the canonical map~\eqref{canonical-map} is
not automatic in general. The following example shows that, without this
condition, the conclusion of Theorem~\ref{M/H} can fail.

\begin{ex}\label{ex:subduction-fails}
Let $H=\mathbb R_{\mathrm{disc}}$ be the additive group of real numbers
endowed with the discrete Lie group structure, and let $M=\mathbb R$. Let
$H$ act freely and smoothly on $M$ on the right by translations,
$x\cdot h:=x+h$. Then $H_0=\{0\}$, so the foliation $\mathcal F$ by
$H_0$-orbits is the point foliation. Hence
$X:=M/H_0=M=\mathbb R$ and $K:=H/H_0=H=\mathbb R_{\mathrm{disc}}$.

The quotient $X/K$ is a single point, since the translation action of $H$ on
$\mathbb R$ is transitive. Therefore $X\times_{X/K}X=X\times X=\mathbb R^2$,
with its usual diffeology. The canonical map in Theorem~\ref{M/H} is
\[
\Delta:\mathbb R\times\mathbb R_{\mathrm{disc}}\longrightarrow
\mathbb R^2,\qquad
(x,h)\longmapsto (x,x+h).
\]
This map is not a subduction. Indeed, consider the smooth plot
$p:\mathbb R\to\mathbb R^2$ given by $p(t)=(0,t)$. If $\Delta$ were a
subduction, then, after shrinking to a connected open neighborhood $V$ of
$0$, the plot $p|_V$ would admit a lift through $\Delta$. Thus there would
exist smooth maps $x:V\to\mathbb R$ and
$h:V\to\mathbb R_{\mathrm{disc}}$ such that
$\Delta(x(t),h(t))=(0,t)$. The first component gives $x(t)=0$, and the
second gives $h(t)=t$. But a smooth map from a connected open interval to
the discrete space $\mathbb R_{\mathrm{disc}}$ is locally constant, whereas
$t\mapsto t$ is not. Hence $\Delta$ is not a subduction.

This example also shows why the subduction hypothesis is necessary, even for
a free smooth action. Since $M/H$ is a point, $\Omega^1(M/H)=0$. On the
other hand, since $\mathcal F$ is the point foliation, every ordinary form on
$\mathbb R$ is $\mathcal F$-basic. Moreover, $dx$ is $H$-invariant, since
$R_h^*dx=d(x+h)=dx$ for every $h\in H$. Hence
\[
0\neq dx\in\Omega^1(M,\mathcal F)^H.
\]
Therefore the pullback
$\pi_H^*:\Omega^\bullet(M/H)\to\Omega^\bullet(M,\mathcal F)^H$
is not surjective in degree $1$ if the subduction hypothesis is omitted.
\end{ex}

The preceding example shows that the subduction hypothesis is genuinely
needed in the non-second-countable case. The next example shows, in the
opposite direction, that second countability of the acting Lie group does
not itself imply the subduction condition, even for a smooth locally free
action. Thus the two hypotheses in Theorem~\ref{M/H} should be viewed as
distinct sufficient mechanisms rather than as variants of a single condition.

\begin{ex}\label{ex:second-countable-not-subduction}
Let $H=\mathbb Z$ act smoothly on the right on $M=\mathbb R$ by
$x\cdot n=2^n x$. The action is not free because the stabilizer of $0$ is
all of $\mathbb Z$, but it is locally free because every stabilizer is a
subgroup of the discrete group $\mathbb Z$. Here $H$ is second countable,
$H_0=\{0\}$, and the foliation $\mathcal F$ by $H_0$-orbits is the
foliation by points. Thus $X=M/H_0=\mathbb R$ and $K=H/H_0=\mathbb Z$.

The canonical map in this case is
\[
\Phi:X\times K\longrightarrow X\times_{X/K}X,\qquad
(x,n)\longmapsto(x,2^n x).
\]
We show that $\Phi$ is not a subduction.

Choose a nonnegative function $\varphi\in C_c^\infty(\mathbb R)$ supported
in $[0,1]$ and positive on $(0,1)$; for example, take
$\varphi(t)=e^{-1/(t(1-t))}$ for $0<t<1$ and $\varphi(t)=0$ otherwise.
For each $k\geq1$, set $I_k=(2^{-k},2^{-k}+2^{-2k})$ and define
$f_k(t)=e^{-k^2}\varphi\bigl(2^{2k}(t-2^{-k})\bigr)$. Then the intervals
$I_k$ are pairwise disjoint and accumulate only at $0$. Moreover, $f_k$ is
supported in $\overline I_k$, positive on $I_k$, and vanishes to infinite
order at the endpoints. For every $m\geq0$, there exists a constant
$C_m>0$, independent of $k$, such that
\[
\|f_k^{(m)}\|_\infty\leq C_m e^{-k^2}2^{2km},\qquad
\|(2^k f_k)^{(m)}\|_\infty\leq C_m e^{-k^2}2^{k+2km}.
\]
In particular, for every $m\geq0$, all derivatives of order $m$ of both
$f_k$ and $2^k f_k$ tend to $0$ as $k\to\infty$.

Define $\alpha,\beta:\mathbb R\to\mathbb R$ by setting
$\alpha(t)=f_k(t)$ and $\beta(t)=2^k f_k(t)$ for $t\in I_k$, and by setting
$\alpha(t)=\beta(t)=0$ outside $\bigcup_{k\geq1}I_k$. Since the intervals
$I_k$ are pairwise disjoint and accumulate only at $0$, the preceding
estimates imply that $\alpha$ and $\beta$ are smooth and vanish to infinite
order at $0$.

Set $p(t):=(\alpha(t),\beta(t))$. Then $p$ is a smooth plot of
$X\times_{X/K}X$ with the subset diffeology inherited from
$X\times X=\mathbb R^2$. Indeed, for each $t$, one has
$\beta(t)=2^n\alpha(t)$ for some $n\in\mathbb Z$.

Suppose, for contradiction, that $\Phi$ is a subduction. Then the plot $p$
admits a local lift through $\Phi$ on some connected open neighborhood $V$
of $0$. Such a lift has the form
$\widetilde p(t)=(\alpha(t),n(t))$, where $n:V\to\mathbb Z$ is smooth and
$\beta(t)=2^{n(t)}\alpha(t)$. Since $\mathbb Z$ has the discrete
diffeology, $n$ is locally constant, and hence constant on $V$; write
$n(t)=n_0$.

Since the intervals $I_k$ accumulate at $0$, the neighborhood $V$ contains
$I_k$ for all sufficiently large $k$. For $t\in I_k$, we have
$\alpha(t)>0$ and $\beta(t)=2^k\alpha(t)$. Comparing this with
$\beta(t)=2^{n_0}\alpha(t)$ gives $n_0=k$. This is impossible for two
distinct sufficiently large values of $k$. Hence $p$ admits no local lift
through $\Phi$ near $0$, and therefore $\Phi$ is not a subduction.
\end{ex}

We now derive the dense subgroup theorem of Clark--Ziegler~\cite{CZ26} as a
direct consequence of Theorem~\ref{M/H}.

\begin{thm}\label{thm:CZ}
Let $H$ be a dense subgroup of a Lie group $G$, endowed with its canonical
Lie group structure of Proposition~\ref{Bour}. Write $\mathfrak h$ and
$\mathfrak g$ for the Lie algebras of $H$ and $G$, respectively, and denote
by $\mathcal F$ the foliation of $G$ by the right $H_0$-orbits. Equip $G/H$
with the quotient diffeology $\mathcal D_H$, with quotient map
$\pi:G\to G/H$.

Then $\mathfrak h$ is an ideal of $\mathfrak g$. Moreover, the pullback
$\pi^*$, followed by evaluation at the identity and multiplication by
$(-1)^k$ in degree $k$, gives a canonical isomorphism of cochain complexes
\[
\Phi:(\Omega^\bullet(G/H,\mathcal D_H),d)
\xrightarrow{\;\cong\;}
(\wedge^\bullet(\mathfrak g/\mathfrak h)^*,d_{\mathrm{CE}}),
\]
where
\[
\Phi^k(\eta)(\bar Y_1,\dots,\bar Y_k)
:=
(-1)^k(\pi^*\eta)_e(Y_1,\dots,Y_k),
\qquad
\bar Y_i=Y_i+\mathfrak h.
\]
Consequently, $H^\bullet_{\mathrm{dR}}(G/H,\mathcal D_H)\cong H^\bullet(\mathfrak g/\mathfrak h)$.
\end{thm}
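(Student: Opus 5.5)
The plan is to realize the homogeneous case as a special case of Theorem~\ref{M/H} with $M=G$ and $H$ acting by right translations $R_h(g)=gh$. This action is smooth and free, hence locally free, and $\mathcal F$ is the foliation by the orbits $gH_0$. Since $H$ may fail to be second countable (an uncountable dense subgroup can carry a discrete intrinsic structure), I will verify hypothesis~(b) rather than~(a).

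\textbf{Step 1: hypothesis~(b).} Let $(a,b):V\to X\times_{X/K}X$ be a plot, where $X=G/H_0$. Locally, the quotient diffeology lets me lift $a$ and $b$ to smooth maps $\tilde a,\tilde b:V\to G$ with $\tilde b(v)\in\tilde a(v)H$ for every $v$. Then $c:=\tilde a^{-1}\tilde b:V\to G$ is smooth and takes values in $H$. By Proposition~\ref{Bour}(2), $c$ is smooth as a map into $H$. Hence on a connected neighborhood $c$ lands in a single coset $h_0H_0$, and the class $\kappa$ of $c$ in the discrete group $K$ is constant. Then $(a,\kappa)$ is a plot of $X\times K$ lifting $(a,b)$ through $\Delta$, so $\Delta$ is a subduction. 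Theorem~\ref{M/H} then gives
\[
\pi^*:\Omega^\bullet(G/H)\xrightarrow{\;\cong\;}\Omega^\bullet(G,\mathcal F)^H .
\]

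\textbf{Step 2: the ideal property and the invariant complex.} For $h\in H$ the conjugation $C_h$ preserves $H$. By Proposition~\ref{Bour}(2) it is smooth on $H$, so $\mathrm{Ad}_h\mathfrak h=\mathfrak h$. The set $\{g\in G:\mathrm{Ad}_g\mathfrak h=\mathfrak h\}$ is closed and contains $H$, so by density it is all of $G$. Differentiating gives $[\mathfrak g,\mathfrak h]\subset\mathfrak h$, so $\mathfrak h$ is an ideal of $\mathfrak g$.

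Next, let $\omega\in\Omega^\bullet(G,\mathcal F)^H$. The map $g\mapsto R_g^*\omega$ is continuous, so $R_h^*\omega=\omega$ for $h\in H$ extends by density to $R_g^*\omega=\omega$ for all $g\in G$. Thus $\omega$ is right $G$-invariant and is determined by $\omega_e\in\wedge^k\mathfrak g^*$. The infinitesimal generators of the right $H_0$-action are the left-invariant fields $Y^L$, $Y\in\mathfrak h$.
\begin{itemize}
\item The condition $\mathcal L_{Y^L}\omega=0$ is automatic from right $G$-invariance, since $Y^L$ generates right translations.
\item Horizontality $\iota_{Y^L}\omega=0$ at $g$ translates, via right invariance, to $\omega_e$ vanishing on $\mathrm{Ad}_g\mathfrak h=\mathfrak h$.
\end{itemize}
Hence evaluation at $e$ is a linear isomorphism
\[
\Omega^\bullet(G,\mathcal F)^H\cong\Omega^\bullet(G)^{G}_{\mathfrak h\text{-hor}}\cong\wedge^\bullet(\mathfrak g/\mathfrak h)^*.
\]

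\textbf{Step 3: the differential.} It remains to check that $\omega\mapsto(-1)^k\omega_e$ intertwines $d$ with $d_{\mathrm{CE}}$. I would use the standard fact that for a left-invariant form, evaluation at $e$ intertwines $d$ with the Chevalley--Eilenberg differential of $\mathfrak g$. For right-invariant forms the same holds after replacing the bracket by its negative, via the anti-isomorphism $Y\mapsto Y^R$ between $\mathfrak g$ and right-invariant vector fields. The sign $(-1)^k$ absorbs this bracket reversal in degree $k$. Finally, $d_{\mathrm{CE}}$ on $\mathfrak g$ restricts to $d_{\mathrm{CE}}$ on $\mathfrak g/\mathfrak h$ for horizontal cochains, because $\mathfrak h$ is an ideal.

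I expect the main obstacle to be the sign bookkeeping in Step~3: getting the $(-1)^k$ exactly right by testing the identity on degrees $0$ and $1$ with the Cartan formula for $d\omega(Y_1^R,Y_2^R)$. Step~1 hinges only on Proposition~\ref{Bour}(2) and should be routine.
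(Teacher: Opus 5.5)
Your proposal is correct and follows essentially the same route as the paper: verify hypothesis~(b) of Theorem~\ref{M/H} via the division map $\tilde a^{-1}\tilde b$ and Proposition~\ref{Bour}, use density to upgrade right $H$-invariance to right $G$-invariance, identify the invariant basic forms with $\wedge^\bullet(\mathfrak g/\mathfrak h)^*$ by evaluation at $e$, and use the sign $(-1)^k$ to absorb the bracket reversal $[Y^R,Z^R]=-[Y,Z]^R$. The only difference is that you prove the ideal property directly, from the closedness of $\{g:\mathrm{Ad}_g\mathfrak h=\mathfrak h\}$ together with density, rather than citing Bourbaki; this also means you never need the normality of $H_0$ in $G$, which the paper quotes but does not actually require.
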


\begin{proof}
By \cite[Chapter 3, \S 9, no.~2, Proposition~5]{Bour89}, since $H$ is dense
in $G$, the Lie algebra $\mathfrak h$ is an ideal of $\mathfrak g$, and
$H_0$ is normal in $G$. Hence $\mathfrak g/\mathfrak h$ is a Lie algebra. Since $H_0$ is normal in
$G$, the right $G$-action on $G$ preserves $\mathcal F$, sending right
$H_0$-orbits to right $H_0$-orbits.

We apply Theorem~\ref{M/H} to the right $H$-action on $G$, which
is free. It remains to verify the subduction hypothesis. Set $X:=G/H_0$ and
$K:=H/H_0$. Let
\[
\Delta:X\times K\longrightarrow X\times_{X/K}X,\qquad
(x,k)\longmapsto (x,x\cdot k).
\]
Let $(a,b):U\to X\times_{X/K}X$ be a plot. Since $X=G/H_0$ carries the
quotient diffeology, locally on $U$ we may choose smooth lifts
$\widetilde a,\widetilde b:V\to G$ such that
$a|_V=\pi_{H_0}\circ\widetilde a$ and
$b|_V=\pi_{H_0}\circ\widetilde b$. Since $a$ and $b$ have the same image in
$X/K\cong G/H$, the division map $r:=\widetilde a^{-1}\widetilde b$ takes
values in $H$. By Proposition~\ref{Bour}, $r$ is smooth as a map $V\to H$.
Thus $q\circ r:V\to K$, where $q:H\to K$ is the quotient map, is a plot, and
$b|_V=a|_V\cdot(q\circ r)$. Hence $(a,q\circ r):V\to X\times K$ is a local
lift of $(a,b)$ through $\Delta$, so $\Delta$ is a subduction.

Theorem~\ref{M/H} gives an isomorphism of cochain complexes
\[
\pi^*:\Omega^\bullet(G/H,\mathcal D_H)
\xrightarrow{\;\cong\;}
\Omega^\bullet(G,\mathcal F)^{R(H)}.
\]
Since $H$ is dense in $G$, every right $H$-invariant smooth form on $G$ is
right $G$-invariant. Therefore
\[
\Omega^\bullet(G,\mathcal F)^{R(H)}
=
\Omega^\bullet(G,\mathcal F)^{R(G)}.
\]

We now identify $\Omega^\bullet(G,\mathcal F)^{R(G)}$ with
$\wedge^\bullet(\mathfrak g/\mathfrak h)^*$. If
$\omega\in\Omega^k(G,\mathcal F)^{R(G)}$, then $\omega_e$ vanishes whenever
one argument lies in $\mathfrak h=T_eH_0$, because $\omega$ is horizontal for
the right $H_0$-foliation. Hence $\omega_e$ descends to an element of
$\wedge^k(\mathfrak g/\mathfrak h)^*$.

Conversely, let $\alpha\in\wedge^k(\mathfrak g/\mathfrak h)^*$. For
$g\in G$, set $\rho_g:=d(R_{g^{-1}})_g:T_gG\to\mathfrak g$. Define a
right $G$-invariant $k$-form $\omega^\alpha$ on $G$ by
\[
\omega^\alpha_g(v_1,\dots,v_k)
:=
\alpha\bigl(\overline{\rho_g(v_1)},\dots,\overline{\rho_g(v_k)}\bigr).
\]
This form is basic for $\mathcal F$. Indeed, the tangent space to the right
$H_0$-orbit through $g$ is $d(L_g)_e\mathfrak h$, and for
$Z\in\mathfrak h$ one has
$\rho_g(d(L_g)_eZ)=\operatorname{Ad}_g Z\in\mathfrak h$, since
$\mathfrak h$ is an ideal. Thus $\omega^\alpha$ is horizontal. Its right
$G$-invariance implies invariance under the right $H_0$-action, so
$\omega^\alpha$ is basic. Therefore evaluation at the identity gives an
isomorphism of graded vector spaces
\[
\mathrm{ev}_e:\Omega^k(G,\mathcal F)^{R(G)}
\longrightarrow \wedge^k(\mathfrak g/\mathfrak h)^*.
\]

It remains to record the differential. Let $d_{\mathrm{CE}}$ denote the
Chevalley--Eilenberg differential on
$\wedge^\bullet(\mathfrak g/\mathfrak h)^*$; see, for example, \cite{K88}.
Let $Y^R$ denote the right-invariant vector field with value
$Y\in\mathfrak g$ at the identity. Since
$[Y_i^R,Y_j^R]=-[Y_i,Y_j]^R$ and the values of a right $G$-invariant form on
right-invariant vector fields are constant, for
$\omega\in\Omega^k(G,\mathcal F)^{R(G)}$ and
$\alpha=\mathrm{ev}_e(\omega)$ one has
$\mathrm{ev}_e(d\omega)=-d_{\mathrm{CE}}\alpha$. Hence multiplication by
$(-1)^k$ in degree $k$ turns evaluation at the identity into a cochain
isomorphism, $\omega\mapsto (-1)^k\mathrm{ev}_e(\omega)$.

Combining this cochain isomorphism with the isomorphism from
Theorem~\ref{M/H} gives the stated canonical isomorphism
\[
\Phi:(\Omega^\bullet(G/H,\mathcal D_H),d)
\xrightarrow{\;\cong\;}
(\wedge^\bullet(\mathfrak g/\mathfrak h)^*,d_{\mathrm{CE}}).
\]
Taking cohomology gives
\[
H^\bullet_{\mathrm{dR}}(G/H,\mathcal D_H)
\cong H^\bullet(\mathfrak g/\mathfrak h).
\]
This completes the proof.
\end{proof}

To finish the section, we apply Theorem~\ref{M/H} to a concrete nonclosed
situation not covered by the Clark--Ziegler dense subgroup theorem. This
shows that the foliation-theoretic viewpoint applies beyond the dense
subgroup setting.

\begin{ex}\label{ex:non-second-countable-nondense}
Let $G=SU(2)$, and let
\[
T=
\left\{
\begin{pmatrix}
z&0\\
0&\overline z
\end{pmatrix}
\,\middle|\,
z\in S^1
\right\}
\subset G
\]
be a maximal torus. Fix $\alpha\in\mathbb R\setminus\mathbb Q$, and choose
an uncountable proper $\mathbb Q$-vector subspace $V\subset\mathbb R$
containing $1$ and $\alpha$. Such a subspace exists by a standard
Hamel-basis argument. Set
\[
H=
\left\{
\begin{pmatrix}
e^{2\pi it}&0\\
0&e^{-2\pi it}
\end{pmatrix}
\,\middle|\,
t\in V
\right\}.
\]
Since $\mathbb Z\subset V$, we have $H\cong V/\mathbb Z$, so $H$ is
uncountable. Since $\alpha\in V$, the subgroup $H$ is dense in $T$, while
the properness of $V$ implies that $H\neq T$. Hence $\overline H=T$, so
$H$ is nonclosed in $G$ but is not dense in $G$.

The canonical Lie algebra of $H$ is zero. Indeed, if
$0\neq Z\in\mathfrak h\subset\mathfrak t$, then
$\exp(\mathbb RZ)=T\subset H$, contradicting $H\neq T$.
Thus the canonical Lie group structure on $H$ is discrete. Since $H$
is uncountable, it is not second countable. In particular, $H_0=\{e\}$,
and the right action of $H$ on $G$ is free.

The natural map $G/H\to G/T\cong S^2$ has fiber $T/H$ and is obtained from
the Hopf fibration $T\to SU(2)\to S^2$ by quotienting its fibers by $H$.
Thus this quotient arises from the nontrivial geometry of the Hopf
fibration rather than from a direct-product construction.

The verification of the subduction hypothesis in the proof of
Theorem~\ref{thm:CZ} does not use the density of $H$ in $G$ and applies
here verbatim. Hence condition~{\rm(b)} of Theorem~\ref{M/H} holds.

Since $H_0=\{e\}$, the corresponding
foliation is the point foliation, and hence
\[
\Omega^\bullet(G/H)\cong\Omega^\bullet(G)^H.
\]
Because $H$ is dense in $T$, every smooth differential form on $G$ that is
invariant under right translations by $H$ is invariant under right
translations by $T$. Therefore
$\Omega^\bullet(G)^H=\Omega^\bullet(G)^T$.

Since $T$ is compact and connected, averaging over $T$ shows that the
inclusion $\Omega^\bullet(G)^T\hookrightarrow\Omega^\bullet(G)$ is a
quasi-isomorphism. Consequently,
\[
H^\bullet_{\mathrm{dR}}(G/H)
\cong H^\bullet_{\mathrm{dR}}(SU(2))
\cong H^\bullet_{\mathrm{dR}}(S^3).
\]
Thus
\[
H^k_{\mathrm{dR}}(G/H)\cong
\begin{cases}
\mathbb R, & k=0,3,\\
0, & \text{otherwise}.
\end{cases}
\]
Equivalently, as a graded algebra,
$H^\bullet_{\mathrm{dR}}(G/H)\cong\Lambda(u)$ with $\deg u=3$.
\end{ex}

\end{document}